\documentclass{amsart}
\usepackage{xcolor,framed}
\colorlet{shadecolor}{blue!20}
\usepackage[all]{xy}

\newtheorem{theorem}{Theorem}
\newtheorem{lemma}{Lemma}
\newtheorem{remark}{Remark}
\newtheorem{example}{Example}
\theoremstyle{definition}
\newtheorem{definition}{Definition}

\newcommand{\Arc}{\operatorname{Arc}}
\newcommand{\Dens}{\operatorname{Dens}} 
\newcommand{\Diff}{\operatorname{Diff}}
\newcommand{\Emb}{\operatorname{Emb}}
\newcommand{\Hor}{\operatorname{Hor}}
\newcommand{\Imm}{\operatorname{Imm}}
\newcommand{\Nor}{\operatorname{Nor}}
\newcommand{\Tan}{\operatorname{Tan}}
\newcommand{\Ver}{\operatorname{Ver}}
\newcommand{\bR}{\mathbb R}
\newcommand{\fX}{\mathfrak X}
\newcommand{\rd}{\mathrm d}
\newcommand{\grad}{\operatorname{grad}}

\subjclass[2010]{Primary 58D10, Secondary 53C17}

\begin{document}

\title{H\"ormander's condition for normal bundles on spaces of immersions}
\author{Martin Bauer}
\address{Fakult\"at f\"ur Mathematik und Geoinformation, TU Wien}
\email{bauer.martin@univie.ac.at}
\author{Philipp Harms}
\address{Department of Mathematics, ETH Z\"urich}
\email{philipp.harms@math.ethz.ch}
\date{September 2015}
\thanks{All participants of the ``Math on the Rocks'' workshop in Grundsund, Sweden, contributed to the results in these notes. In particular, Lemma~\ref{lem:forms} in its current form is in large parts due to Olivier Verdier. M. Bauer was supported by the European Research Council (ERC), within the project 306445 (Isoperimetric Inequalities and Integral Geometry) and by the FWF-project P24625 (Geometry of Shape spaces).}
\begin{abstract}
Several representations of geometric shapes involve quotients of mapping spaces. The projection onto the quotient space defines two sub-bundles of the tangent bundle, called the horizontal and vertical bundle. We investigate in these notes the sub-Riemannian geometries of these bundles. In particular, we show for a selection of bundles which naturally occur in applications that they are either bracket generating or integrable.	
\end{abstract}
\maketitle

\section{Introduction}

Several representations of geometric shapes involve quotients of mapping spaces. Three examples are presented in the diagram below:
\begin{equation*}
\xymatrix{
\Imm(M,N) \ar[d] 	
& \Diff(N) \ar[d] 
& \Diff(M) \ar[d]
\\
\Imm(M,N)/\Diff(M)
& \Emb(M,N) 
& \Dens(M)=\Diff(M)/\Diff_\mu(M)
}
\end{equation*}
The first example is the quotient of embeddings modulo reparametrizations. In the second example, $\Diff(N)$ acts on some fixed ``template'' element of $\Emb(M,N)$. The third example is Moser's representation of densities as diffeomorphisms modulo volume preserving diffeomorphisms \cite{Mo1965,EbMa1970}.

Let us abstract from these examples and consider a submersion $\pi\colon P \to Q$ between possibly infinite-dimensional manifolds. If $P$ is endowed with a (weak) Riemannian metric $G$, two natural sub-bundles of $TP$ appear: the vertical bundle $\Ver$ is defined as the kernel of $T\pi$ and the horizontal bundle $\Hor$ as the set of tangent vectors in $TP$ which are $G$-orthogonal to $\Ver$. Note that $\Ver + \Hor$ might or might not span all of $TP$. However,  any closed complement of the vertical bundle is the horizontal bundle of some Riemannian metric, as was recently shown in \cite{bauer2015metrics} for the special case of planar curves. 

While the vertical bundle is always integrable (the integral manifolds are the fibers of the projection), it is in general not clear whether the horizontal bundles 	are integrable or, at the other extreme, bracket generating. This question is interesting for several reasons.
\begin{itemize}
\item Integrability of the horizontal bundle is necessary for lifting loops in $Q$ to horizontal loops in $P$. This is a natural task in, for example, the analysis of cardiac cycles, which can be represented as loops in shape space. 
\item If the horizontal bundle is integrable, then the horizontal geodesic equation can be solved in the lower-dimensional coordinate system of the integral manifold instead of the higher-dimensional coordinate system of $P$. 
\item If on the other hand the horizontal bundle is bracket generating, then its integral manifold is a dense subset of $P$, and any two points in the integral manifold can be connected by a horizontal curve \cite{Led2004,GMV2012}. 
\end{itemize}

\section{Results}

\begin{definition}
Let $M$ be a compact manifold and $(N,\overline g)$ a Riemannian manifold. Then the sub-bundles $\Tan$ and $\Nor$ of $T\Imm(M,N)$ are given at each $f\in\Imm(M,N)$ by 
\begin{align*}
\Tan_f &= \{Tf \circ X: X \in \fX(M)\}, 
\\
\Nor_f &= \{h \in T_f\Imm(M,N): \forall x \in M, \forall X \in T_xM, \overline g(h(x), Tf(X))=0\}.
\end{align*}
\end{definition}

\begin{remark}
The bundle $\Tan$ is the vertical bundle of the projection onto the space of unparametrized immersions and is integrable. Indeed, the group $\Diff(M)$ acts on $\Imm(M,N)$ by composition from the right and the $\Diff(M)$-orbits are integral manifolds for $\Tan$. 
\end{remark}

The following theorem shows that $\Nor$ is bracket generating and that the first bracket is enough to generate all of the tangent space.

\begin{theorem}\label{thm:nor}
Let $M$ be compact and $\dim(N)=\dim(M)+1$. Then 
$$\Nor + [\Nor,\Nor] = T\Imm(M,N).$$
\end{theorem}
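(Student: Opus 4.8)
Since $\dim(N)=\dim(M)+1$, the orthogonal complement of $Tf(T_xM)$ in $T_{f(x)}N$ is one-dimensional, so (assuming $f$ coorientable, which I address below) there is a unit normal field $n_f$ along $f$, and $\Nor_f=\{a\,n_f:a\in C^\infty(M)\}$. Every $h\in T_f\Imm(M,N)$ splits pointwise as $h=h^\top+\overline g(h,n_f)\,n_f$, giving the splitting $T_f\Imm(M,N)=\Tan_f\oplus\Nor_f$. Because $\Nor$ already accounts for the normal summand, the theorem reduces to showing that brackets of sections of $\Nor$ realize every tangential direction, i.e.\ that $Tf(X)\in\Nor_f+[\Nor,\Nor]_f$ for all $X\in\fX(M)$. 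To produce brackets I would use the simplest sections of $\Nor$: for $a\in C^\infty(M)$ define the vector field $H_a$ on $\Imm(M,N)$ by $H_a(f)=a\,n_f$.

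\textbf{The bracket computation (the crux).} The space $\Imm(M,N)$ carries the torsion-free covariant derivative induced pointwise by $\nabla^{\overline g}$, so $[H_a,H_b]=\nabla_{H_a}H_b-\nabla_{H_b}H_a$. The key geometric input is the first variation of the unit normal: along a curve $f_t$ with $\partial_t f_t=a\,n_f$, differentiating the relations $\overline g(n,n)=1$ and $\overline g(n,Tf(X))=0$ and using torsion-freeness gives
\begin{equation*}
\nabla_{\partial_t}n_{f_t}\big|_{0}=-\,Tf(\grad a),
\end{equation*}
where $\grad$ is taken with respect to the induced metric $g=f^*\overline g$; note this is purely tangential. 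Since $b\in C^\infty(M)$ is constant along $f_t$, one obtains $\nabla_{H_a}H_b|_f=b\,\nabla_{\partial_t}n_{f_t}|_0=-b\,Tf(\grad a)$, and by antisymmetrization
\begin{equation*}
[H_a,H_b]_f=Tf\!\left(a\,\grad b-b\,\grad a\right).
\end{equation*}
The normal components cancel, so the first bracket is already purely tangential, which is exactly why $\Nor+[\Nor,\Nor]$ (and no higher brackets) can suffice. I expect the main obstacle to lie here: pinning down the variation-of-normal formula with correct signs and justifying the bracket identity in the convenient-calculus setting on the infinite-dimensional manifold $\Imm(M,N)$.

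\textbf{Surjectivity onto $\fX(M)$ and conclusion.} Writing $V_{a,b}:=a\,\grad b-b\,\grad a$, I would show that the real span of $\{V_{a,b}\}$ is all of $\fX(M)$. Taking the constant function $1$ gives $\grad h=V_{1,h}$, and from $\grad(ab)=a\,\grad b+b\,\grad a$ one gets $a\,\grad b=\tfrac12 V_{a,b}+\tfrac12 V_{1,ab}$; hence every field of the form $a\,\grad b$ lies in this span. Since $M$ is compact and Riemannian, exact one-forms generate $\Omega^1(M)$ as a $C^\infty(M)$-module, equivalently every $X\in\fX(M)$ is a finite sum $\sum_i a_i\,\grad b_i$ (via charts and a partition of unity), so $X$ lies in the span of the $V_{a,b}$. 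Therefore $Tf(\fX(M))=\Tan_f\subseteq[\Nor,\Nor]_f$, and combined with the normal summand $\Nor_f$ this yields $\Nor_f+[\Nor,\Nor]_f=T_f\Imm(M,N)$. Finally, if $f$ is not coorientable, the unit normal only exists locally, but the argument is local in $M$: using locally defined normals together with cutoff functions one still realizes each tangential value $Tf(v)$, $v\in T_xM$, as a bracket, which suffices for the pointwise sub-bundle identity.
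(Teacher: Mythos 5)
Your proposal is correct and, in its main line, follows the same strategy as the paper: use the splitting $T_f\Imm(M,N)=\Tan_f\oplus\Nor_f$, compute $[an,bn]=Tf(a\grad^g b-b\grad^g a)$ via the torsion-free covariant derivative on $\Imm(M,N)$ together with the variational formula for the unit normal (which the paper cites as Lemma~\ref{lem:variation} rather than rederiving; your derivation and signs agree with it), and then show that the fields $a\grad^g b-b\grad^g a$ span $\fX(M)$. Where you genuinely differ is in this last spanning step, which the paper isolates as Lemma~\ref{lem:forms}: there, a one-form $f\rd g$ with $f>0$ is written as $a\rd b-b\rd a$ with $a=(fe^{-g})^{1/2}$, $b=(fe^g)^{1/2}$, and general coefficients are handled by splitting them into parts bounded from below and above. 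You instead use the polarization-type identity $a\,\rd b=\tfrac12\,(a\,\rd b-b\,\rd a)+\tfrac12\,\rd(ab)$, which reduces everything at once to sums $\sum_i a_i\,\rd b_i$ and is arguably cleaner: it avoids the square-root trick and all positivity bookkeeping. Both arguments then finish with charts and a partition of unity on the compact manifold $M$.

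One step you should tighten is the non-coorientable case. An element of $T_f\Imm(M,N)$ is a whole vector field along $f$, not a single tangent vector, so ``realizing each tangential value $Tf(v)$, $v\in T_xM$, as a bracket'' is not the statement you need; spanning pointwise over $M$ does not imply spanning the space of sections, and $\Nor_f+[\Nor,\Nor]_f$ is a subspace of the latter. The correct repair is the one the paper makes: decompose a given $X\in\fX(M)$ by a partition of unity into finitely many fields supported in open sets $U$ over which a local unit normal $n$ exists, and then choose the functions $a,b$ in the brackets $[an,bn]$ supported in $\overline U$ (the paper's Remark~\ref{rem:support} ensures the spanning lemma survives this support constraint), so that $an,bn$ are well-defined local sections of $\Nor$. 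Your cutoff-function remark points in this direction, but as written it argues for the wrong (pointwise in $M$) statement.
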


\begin{proof}
Assume that a normal vector field $n$ to $f\in \Imm(M,N)$ is defined on all of $M$ and locally around $f$. Then any functions $a,b$ on $M$ define local vector fields $an,bn$ on $\Imm(M,N)$. Let $\nabla$ denote the covariant derivative on $\Imm(M,N)$ which is associated to $(N,\overline g)$; see Section~\ref{sec:covariant}. Then the Lie bracket $[an,bn]$ can be expressed using covariant derivatives because $\nabla$ is torsion-free by Lemma~\ref{lem:torsion}. By the variational formula for the normal vector in Lemma~\ref{lem:variation}, 
\begin{equation}\label{eq:nor}\begin{aligned}[]
[an,bn] 
&= \nabla_{an}(bn)-\nabla_{bn}(an) 
=b \nabla_{an}n-a \nabla_{bn}n
= Tf\left(a\grad^g b-b\grad^g a\right)
\\
&= Tf\left(g^{-1}(a\rd b-b\rd a)\right).
\end{aligned}\end{equation}
By Lemma~\ref{lem:forms}, all one-forms on $M$ are linear combinations of one-forms $a\rd b-b\rd a$. This shows $\Tan \supseteq [\Nor+\Nor]$.

The assumption that $n$ is defined globally on $M$ can be eliminated by localization. Indeed, as $M$ is compact, any vector field $X \in \mathfrak X(M)$ is a finite sum of vector fields supported in domains $U$ such that $n$ is defined in a neighborhood of $\overline U$. By Remark~\ref{rem:support}, the functions $a,b$ can be chosen with support in $\overline U$. It follows that $an,bn$ are well-defined.
\end{proof}

\begin{remark}
If $N$ is a linear space, the covariant derivatives in the proof of Theorem~\ref{thm:nor} can be replaced by directional derivatives. For example, in the case of planar curves $M=S^1,N=\bR^2$, Equation~\eqref{eq:nor} evaluated at some $c \in \Imm(S^1,\bR^2)$ takes the form
$$[an,bn]_c = D_{c,an}(bn)-D_{c,bn}(an)
=(aD_s b-bD_s a)v.$$
\end{remark}

\begin{example}
A further example of an integrable bundle are the arc-length preserving deformations of planar curves. Let $M=S^1, N=\bR^2$, and define the vector bundle $\Arc\subset T\Imm(S^1,\bR^2)$ at each $c\in\Imm(S^1,\bR^2)$ by 
$$\Arc_c = \{h \in T_c\Imm(S^1,\bR^2) \colon D_s (D_{c,h}ds/ds)=0\}.$$
Then the bundle $\Arc$ is integrable and the collections of curves $c$ whose velocities $|\partial_\theta c| \in C^\infty(S^1)$ are multiples of each other are integral manifolds for $\Arc$. 
\end{example}


\appendix

\section{Covariant derivative on $\Imm(M,N)$}\label{sec:covariant}

We recall some definitions and results of \cite[Section~4.2]{bauer2011sobolev}. Let $\nabla$ be the Levi-Civita covariant derivative of the Riemannian manifold $(N,\overline g)$. Then $\nabla_X h\colon Q \to TN$ is well-defined for any manifold $Q$, vector field $X \in \mathfrak X(Q)$, and mapping $h\colon Q \to TN$. This covariant derivative can be extended to $\Imm(M,N)$ using the isomorphism $\wedge\colon C^\infty(Q,C^\infty(M,TN)) \to C^\infty(Q\times M,TN)$ and its inverse $\vee$. Let $h\colon Q\to T\Imm(M,N)$ and $X \in \mathfrak X(Q)$. Then $\nabla_X h$ is defined as $(\nabla_{X\times 0}h^{\wedge})^\vee$. 

\begin{lemma}\label{lem:torsion}
The covariant derivative $\nabla$ on $\Imm(M,N)$ is torsion-free, i.e., $\nabla_XY-\nabla_YX=[X,Y]$ holds for any $X,Y \in \mathfrak X(\Imm(M,N))$.
\end{lemma}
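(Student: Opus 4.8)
The plan is to trace the torsion on $\Imm(M,N)$ back to the torsion of the Levi--Civita connection on $N$, using that $\nabla$ is built pointwise in $M$ through the adjunction $\wedge,\vee$. The only input about $N$ that I will need is the \emph{symmetry lemma} for pullback connections: if $\phi\colon P\to N$ is smooth and $\nabla$ is torsion-free on $N$, then the induced connection along $\phi$ satisfies $\nabla_U(T\phi\circ V)-\nabla_V(T\phi\circ U)=T\phi\circ[U,V]$ for all $U,V\in\mathfrak X(P)$. This is the standard restatement of $T^N=0$ (the torsion tensor evaluated on $T\phi\circ U$ and $T\phi\circ V$ vanishes), and it is the single place where torsion-freeness on $N$ enters.

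I would then specialize to $P=\Imm(M,N)\times M$ and $\phi=\operatorname{ev}\colon(f,m)\mapsto f(m)$, with $U=X\times 0$ and $V=Y\times 0$ for $X,Y\in\mathfrak X(\Imm(M,N))$. Two identifications make this work. First, $T\operatorname{ev}\circ(X\times 0)=X^\wedge$: holding $m$ fixed and moving $f$ in the direction $X(f)$ differentiates $\operatorname{ev}(f,m)=f(m)$ to $X(f)(m)=X^\wedge(f,m)$, and likewise for $Y$; in particular the foot-point map of $Y^\wedge$ is exactly $\operatorname{ev}$, so the operator $\nabla_{X\times 0}Y^\wedge$ appearing in the definition of $\nabla$ is precisely the pullback connection along $\operatorname{ev}$ to which the symmetry lemma applies. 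Second, $[X\times 0,Y\times 0]=[X,Y]\times 0$, since both fields are tangent to the $\Imm(M,N)$-factor with components independent of $m$; applying $T\operatorname{ev}$ then gives $T\operatorname{ev}\circ[X\times 0,Y\times 0]=[X,Y]^\wedge$. The symmetry lemma now reads $\nabla_{X\times 0}Y^\wedge-\nabla_{Y\times 0}X^\wedge=[X,Y]^\wedge$.

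Finally I would apply the linear map $\vee$ to both sides and invoke the definition $\nabla_XY=(\nabla_{X\times 0}Y^\wedge)^\vee$, obtaining $\nabla_XY-\nabla_YX=([X,Y]^\wedge)^\vee=[X,Y]$, as claimed. The main obstacle is not conceptual but a matter of justifying the manipulations in the convenient-calculus setting on the infinite-dimensional manifold $\Imm(M,N)$: one must check that $Y^\wedge$ is a genuine smooth vector field along $\operatorname{ev}$, that the pushforward and bracket identities above remain valid there, and that $\vee$ commutes with the difference of covariant derivatives. Each of these is a statement pointwise in $m\in M$, so it reduces to the corresponding finite-dimensional fact on $N$.
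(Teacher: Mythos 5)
Your proposal is correct and follows essentially the same route as the paper: both reduce the claim to the symmetry identity $\nabla_U(Tf\circ V)-\nabla_V(Tf\circ U)=Tf([U,V])$ for the torsion-free Levi--Civita connection on $N$ pulled back along the evaluation map $\Imm(M,N)\times M\to N$, using the identifications $Tf(X\times 0)=X^\wedge$ and $[X\times 0,Y\times 0]=[X,Y]\times 0$ before applying $\vee$. The only cosmetic difference is that the paper first states the identity for a general manifold $Q$ and then specializes $Q=\Imm(M,N)$, whereas you work with the evaluation map directly.
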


\begin{proof}
Let $X,Y$ be vector fields on a manifold $Q$ and $f\colon Q\times M \to TN$. Then $X\times 0$ and $Y\times 0$ are vector fields on $Q\times M$ and 
$$\nabla_{X\times 0}Tf(Y\times 0)-\nabla_{Y\times 0}Tf(X\times 0)=Tf([X\times 0,Y\times 0])=Tf([X,Y]\times 0)$$
because the Levi-Civita covariant derivative on $(N,\overline g)$ is torsion-free \cite[Section~22.10]{michor2008topics}. The statement of the Lemma follows by setting $Q=\Imm(M,N)$, $f(g,x)=g(x)$ for all $g \in \Imm(M,N)$ and $x \in M$, and noting that $Tf(X\times 0)=X^\wedge$.
\end{proof}

\section{Variational formula for the normal vector field}

\begin{lemma}\label{lem:variation}
\cite[Section~4.11]{bauer2012almost}
Let $X$ be a vector field on $\Imm(M,N)$. Then the variation of the normal vector field $n$ in the direction of $X$ is
$$\nabla_X n = -Tf\left( L X^\top) 
+ \operatorname{grad}^g \overline g(X,n)\right),$$
where $X=Tf \circ X^\top + \overline g(X,n) n$ is the decomposition in tangential and normal components, $g=f^*\overline g$ is the pull-back of $\overline g$ to $M$, and $L$ is the Weingarten map. 
\end{lemma}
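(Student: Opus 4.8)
The plan is to derive the formula from the two pointwise conditions that characterize the unit normal field $n$ along $f$: the normalization $\overline g(n,n)=1$ and the orthogonality $\overline g(n, Tf\circ Y)=0$ for every $Y\in\fX(M)$. Differentiating each of these identities in the direction of $X$ will yield, respectively, the information that $\nabla_X n$ is tangential and an explicit formula for its tangential part. Since the claimed right-hand side already lies in the image of $Tf$, the first observation matches the structure of the assertion, and the whole task reduces to pinning down the tangential vector field.

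First I would differentiate $\overline g(n,n)=1$. Because $\nabla$ is built from the Levi-Civita connection of $\overline g$ and is therefore compatible with the metric, this gives $0=2\,\overline g(\nabla_X n, n)$, so $\nabla_X n$ is orthogonal to $n$ pointwise. As $\dim N=\dim M+1$, the orthogonal complement of $n(x)$ in $T_{f(x)}N$ is exactly $Tf(T_xM)$, so $\nabla_X n=Tf\circ W$ for a unique $W\in\fX(M)$, and it remains to identify $W$.

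Next I would differentiate the orthogonality relation $\overline g(n, Tf\circ Y)=0$ for a fixed $Y\in\fX(M)$, obtaining
$$\overline g(\nabla_X n, Tf\circ Y) = -\,\overline g(n, \nabla_X(Tf\circ Y)).$$
The key step is to rewrite $\nabla_X(Tf\circ Y)$. Realizing $X$ as the generator of a variation $F$ on an interval times $M$, the interval field and $Y$ commute, so the same torsion-free computation as in Lemma~\ref{lem:torsion} gives $\nabla_X(Tf\circ Y)=\nabla_Y X$, the covariant derivative along $M$ of the $TN$-valued map $X$. I would then insert the decomposition $X=Tf\circ X^\top+\overline g(X,n)\,n$ and expand by the Leibniz rule: the Gauss formula $\nabla_Y(Tf\circ X^\top)=Tf(\nabla^g_Y X^\top)+g(LX^\top,Y)\,n$ supplies the normal component of the tangential part, while $\overline g(n,\nabla_Y n)=0$ kills the contribution of $\nabla_Y n$. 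This leaves $\overline g(n,\nabla_Y X)=g(Y,LX^\top)+Y\,\overline g(X,n)$.

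Finally I would combine the two computations. Writing $Y\,\overline g(X,n)=g(Y,\grad^g\overline g(X,n))$ and $\overline g(\nabla_X n, Tf\circ Y)=\overline g(Tf\circ W,Tf\circ Y)=g(W,Y)$ since $g=f^*\overline g$, I obtain $g(W,Y)=-g(Y, LX^\top+\grad^g\overline g(X,n))$ for all $Y$, hence $W=-(LX^\top+\grad^g\overline g(X,n))$, which is the claim. The main obstacle is the exchange $\nabla_X(Tf\circ Y)=\nabla_Y X$: it requires translating the infinite-dimensional covariant derivative of Appendix~\ref{sec:covariant} back into the Levi-Civita connection on $N$ and carefully tracking which field is being differentiated, together with fixing the sign convention for the Weingarten map $L$ so that $g(LY,Z)$ equals the second fundamental form $\overline g(\nabla_Y(Tf\circ Z),n)$.
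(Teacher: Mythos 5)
Your proof is correct. Note that the paper itself offers no argument for this lemma --- it is quoted verbatim from \cite[Section~4.11]{bauer2012almost} --- so the comparison here is against that reference, and your derivation is essentially the standard one given there: differentiate $\overline g(n,n)=1$ to conclude that $\nabla_X n$ is tangential (using metric compatibility of the extended connection and codimension one), then differentiate $\overline g(n,Tf\circ Y)=0$ and use the torsion-free exchange $\nabla_X(Tf\circ Y)=\nabla_Y X$ plus the Gauss formula to identify the tangential part. Your justification of the exchange step --- realizing $X$ by a variation $F$ on $I\times M$ so that the interval field and $0\times Y$ commute, exactly as in the proof of Lemma~\ref{lem:torsion} --- is the right way to make it rigorous in the framework of Appendix~\ref{sec:covariant}, and your closing remark about fixing the convention $g(LY,Z)=\overline g\left(\nabla_Y(Tf\circ Z),n\right)$ is precisely the convention under which the stated sign is correct (with the opposite convention the formula would acquire a plus sign on $LX^\top$). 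The only implicit hypotheses worth flagging are those the paper itself leaves implicit: $\dim N=\dim M+1$ and the existence of the unit normal field $n$ (globally, or locally with the localization handled as in Theorem~\ref{thm:nor}).
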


%

\section{Auxiliary result about one-forms}

\begin{lemma}\label{lem:forms}
Let $M$ be compact. Then $\Omega^1(M)=\operatorname{span}_{\bR} \{a \rd b-b \rd a: a,b \in C^\infty(M)\}.$
\end{lemma}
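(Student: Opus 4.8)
The plan is to establish the nontrivial inclusion $\Omega^1(M) \subseteq \operatorname{span}_{\bR}\{a\,\rd b - b\,\rd a\}$; the reverse inclusion is immediate, since each $a\,\rd b - b\,\rd a$ is a smooth one-form. I would split the argument into two stages: an algebraic reduction showing that every one-form of the special shape $f\,\rd g$ already lies in the span, followed by a global decomposition of an arbitrary one-form into a finite sum of such terms.

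For the algebraic step, the key is the polarization-type identity
\begin{equation*}
f\,\rd g = \tfrac{1}{2}(f\,\rd g - g\,\rd f) + \tfrac{1}{2}\,\rd(fg).
\end{equation*}
The first summand is a generator by definition. For the second, I would note that every exact form is a generator: taking $a=1$ (the constant function) and $b=h$ yields $a\,\rd b - b\,\rd a = \rd h$. Hence $f\,\rd g$ is a linear combination of two generators, and consequently the span contains all one-forms $f\,\rd g$ with $f,g \in C^\infty(M)$.

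For the global step, I would use compactness to produce a finite decomposition $\omega = \sum_{i=1}^k f_i\,\rd x^i$ of an arbitrary $\omega \in \Omega^1(M)$. By the Whitney embedding theorem, the compact manifold $M$ embeds in some $\bR^k$; let $x^1,\dots,x^k \in C^\infty(M)$ be the restrictions of the ambient coordinate functions. Since the embedding is an immersion, the differentials $\rd x^1,\dots,\rd x^k$ span $T_p^*M$ at every point $p$. Equipping $M$ with the induced Riemannian metric, I would let $W$ be the vector field metric-dual to $\omega$ and set $f_i(p) = \langle W_p, e_i\rangle$, the $i$-th ambient component of $W_p$; these are smooth functions on $M$. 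Because the induced metric is the restriction of the Euclidean inner product and $\rd x^i|_p(v)=\langle v,e_i\rangle$, a direct computation gives $\sum_i f_i\,\rd x^i = \omega$. Combining this with the algebraic step expresses $\omega$ as a finite $\bR$-linear combination of generators.

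The main obstacle is precisely this global decomposition: locally, writing $\omega$ in a chart as $\sum_j h_j\,\rd y^j$ is trivial, but the local coordinates $y^j$ and coefficients $h_j$ are not globally defined, so the local expressions must be patched. The embedding argument sidesteps the partition-of-unity bookkeeping by producing genuinely global functions $f_i$ and $x^i$, at the harmless cost of raising the number of terms from $\dim M$ to the embedding dimension $k$. An alternative, should one prefer to avoid Whitney's theorem, is to cover $M$ by finitely many charts with a subordinate partition of unity $\{\rho_\alpha\}$, and to replace each local coordinate $y^j$ by $\chi_\alpha\, y^j$ for a bump function $\chi_\alpha$ equal to $1$ on the support of $\rho_\alpha$, so that $(\rho_\alpha h_j)\,\rd(\chi_\alpha y^j)$ is a globally defined term of the form $f\,\rd g$ agreeing with the cut-off local contribution; summing over the finite cover then yields the desired finite decomposition.
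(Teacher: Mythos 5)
Your proof is correct, but it takes a genuinely different route from the paper's in both of its stages. For the algebraic step, the paper does not use your polarization identity $f\,\rd g = \tfrac{1}{2}(f\,\rd g - g\,\rd f) + \tfrac{1}{2}\,\rd(fg)$; instead it sets $a=(fe^{-g})^{1/2}$, $b=(fe^{g})^{1/2}$, which gives $a\,\rd b-b\,\rd a=f\,\rd g$ but only for \emph{positive} $f$, and then needs two further reductions (exact forms via $a=1$, $b=g$, then splitting an arbitrary $f$ into a part bounded from below and a part bounded from above). Your identity handles arbitrary $f$ in one line, using only the same observation about exact forms, so it is a genuine simplification of that step. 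For the global step, the paper covers $M$ by finitely many charts whose coordinate functions extend to all of $M$ and patches with a partition of unity --- essentially the alternative you sketch in your final sentence --- whereas you embed $M$ into $\bR^k$ and produce a single global decomposition $\omega=\sum_i f_i\,\rd x^i$ via the induced metric and metric duality; your verification $\sum_i f_i(p)\,\rd x^i(v)=\langle W_p,v\rangle=\omega_p(v)$ is exactly right, and the $f_i$ are smooth as you say. What the embedding buys is the elimination of patching bookkeeping; what it costs is an appeal to the embedding theorem, whose proof for compact manifolds itself rests on the same partition-of-unity technology, so nothing is gained in elementarity. One point worth noting: the paper's localized construction is not merely a stylistic choice, since it feeds directly into Remark~\ref{rem:support} (a form supported in an open set $U$ can be generated using functions supported in $\overline U$), which is what the localization argument in the proof of Theorem~\ref{thm:nor} actually requires; your global functions $x^i$ do not have this support property, so to recover that refinement you would need to fall back on the bump-function variant you outline at the end.
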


\begin{proof}
All one-forms $f\rd g$ with positive $f$ can be generated by elements of the form $a \rd b-b \rd a$. Indeed, set $a=(fe^{-g})^{1/2},  b=(fe^g)^{1/2}$ and check that $a\rd b-b\rd a=f\rd g$. Moreover, the closed one-form $\rd g$ can be generated by setting $a=1, b=g$. This allows one to generate all one-forms $f\rd g$ with $f$ bounded from below. As every function can be decomposed in a function bounded from below, and one from above, this allows one to generate all one-forms $f \rd g$. 

Let $U$ be an open set in $M$ such that there exist functions $x^1,\dots,x^d$ defined on all of $M$ providing a coordinate system on $U$. Then any one-from $\alpha$ with support in $U$ can be written as $\alpha = \sum_{i=1}^d \alpha(\partial x^i) \rd x^i$, showing that $\alpha$ is a linear combination of expressions of the form $f \rd g$.

Finally, any one-form on $M$ is a sum of finitely many one-forms supported in open sets $U$ as above. To see this, note that any point $x$ in $M$ has an open neighborhood $U$ with the above properties. As $M$ is compact, finitely many such neighborhoods $U_1,\dots,U_n$ cover $M$. Let $\phi_1,\dots,\phi_n$ be a partition of unity subordinate to $U_1,\dots,U_n$. Then any one-form $\alpha$ can be written as $\alpha = \sum_{i=1}^n \phi_i \alpha$ and $\phi_i\alpha$ is supported in $U_i$.
\end{proof}

\begin{remark}\label{rem:support}
If $\alpha$ is a one-form with support in an open set $U\subseteq M$, then it can be represented as a linear combination of forms $a\rd b-b\rd a$ with functions $a,b$ supported in $\overline U$. Indeed, at each step of the proof of Lemma~\ref{lem:forms}, the functions $a,b$ may be multiplied by a bump function which vanishes outside of $U$ and equals $1$ on the support of $\alpha$.
\end{remark}


\end{document}